\newtheorem{lemma}{Lemma}[section]
\newtheorem{theorem}[lemma]{Theorem}
\newtheorem{proposition}[lemma]{Proposition}
\newtheorem{corollary}[lemma]{Corollary}
\theoremstyle{definition}
\newtheorem{question}{Question}
\renewcommand{\leq}{\leqslant}
\renewcommand{\geq}{\geqslant}
\renewcommand{\emptyset}{\varnothing}
\def\calC{\mathcal{C}}
\def\F{\mathbf F}
\newcommand{\SU}{\mathrm{SU}}
\newcommand{\PGL}{\mathrm{PGL}}
\newcommand{\AGL}{\mathrm{AGL}}
\newcommand{\PSL}{\mathrm{PSL}}
\newcommand{\PGammaL}{\mathrm{P\Gamma L}}
\newcommand\floor[1]{\left\lfloor{#1}\right\rfloor}
\newcommand\ceil[1]{\left\lceil{#1}\right\rceil}
\begin{document}

\title{Large minimal invariable generating sets in the finite symmetric groups}

\author{Daniele Garzoni}
\address{Daniele Garzoni, Dipartimento di Matematica ``Tullio Levi-Civita'', Universit\`a degli Studi di Padova, Padova, Italy}
\email{daniele.garzoni@phd.unipd.it}

\author{Nick Gill}
\address{Nick Gill, Department of Mathematics, University of South Wales, Treforest CF37 1DL, U.K.}
\email{nick.gill@southwales.ac.uk}

\begin{abstract}
   For a finite group $G$, let $m_I(G)$ denote the largest possible cardinality of a minimal invariable generating set of $G$. We prove an upper and a lower bound for $m_I(S_n)$, which show in particular that $m_I(S_n)$ is asymptotic to $n/2$ as $n\rightarrow \infty$.
\end{abstract}

\maketitle

\section{Introduction}


Let $G$ be a group, and let $I=\{\mathcal{C}_1,\dots, \mathcal{C}_k\}$ be a set of conjugacy classes of $G$. We say that $I$ \emph{invariably generates} $G$ if $\langle x_1,\dots, x_k \rangle=G$ for all $x_1\in\mathcal{C}_1,\dots, x_k\in\mathcal{C}_k$.  The set $I$ is a \emph{minimal invariable generating set} (or \emph{MIG-set} for short) if $I$ invariably generates $G$ and no proper subset of $I$ invariably generates $G$. We let $m_I(G)$ denote the largest possible cardinality of a MIG-set for the group $G$. 

We state our main theorem.

\begin{theorem}\label{t: main}
Let $n\geq 2$ be an integer, and let $G=S_n$ be the symmetric group on $n$ letters. Then
\[
\frac{n}{2}-\log n < m_I(G) < \frac n2 +\Delta(n)+O\left(\frac{\log n}{\log \log n}\right),
\]
where $\Delta(n)$ is the number of divisors of $n$.
\end{theorem}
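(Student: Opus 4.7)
I argue the upper and lower bounds separately.

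\textbf{Upper bound.} Suppose $I=\{\mathcal{C}_1,\dots,\mathcal{C}_k\}$ is a MIG-set of $S_n$. By minimality, for each $i$ there is a maximal subgroup $H_i<S_n$ containing an element of each $\mathcal{C}_j$ for $j\neq i$ but no element of $\mathcal{C}_i$. The crucial observation is that the map $i\mapsto [H_i]$ (into $S_n$-conjugacy classes of maximal subgroups) is injective: if $H_j=gH_ig^{-1}$ for some $i\neq j$, then $H_j$ contains a representative of the conjugation-invariant set $\mathcal{C}_i$ by the defining property of $H_j$, hence so does its conjugate $H_i$, contradicting the definition of $H_i$. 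Therefore $k$ is bounded by the number of conjugacy classes of maximal subgroups of $S_n$. By the O'Nan--Scott theorem these partition into intransitive ($\lfloor(n-1)/2\rfloor$ classes $S_\ell\times S_{n-\ell}$ with $\ell<n/2$), imprimitive ($\Delta(n)-2$ classes $S_a\wr S_b$ with $ab=n$, $a,b\geq 2$), primitive (bounded by $O(\log n/\log\log n)$, via known bounds on the primitive maximal subgroups of $S_n$), and $A_n$. Summing yields the claimed upper bound.

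\textbf{Lower bound.} I aim to construct a MIG-set of size at least $n/2-\log n$ whose classes each witness a distinct intransitive maximal subgroup. Fix an index set $J\subseteq\{1,\dots,\lfloor n/2\rfloor\}$ with $|J|\geq n/2-\log n$; for each $k\in J$, I choose a partition $\lambda^{(k)}$ of $n$ such that: (i) $k$ is not a subset sum of the parts of $\lambda^{(k)}$, so the class $\mathcal{C}_k$ of cycle type $\lambda^{(k)}$ has no element fixing a $k$-subset; (ii) every $k'\in J\setminus\{k\}$ is a subset sum of $\lambda^{(k)}$, so $\mathcal{C}_k$ meets some conjugate of $S_{k'}\times S_{n-k'}$; and (iii) the full family $\{\mathcal{C}_k:k\in J\}$ is not contained in any common imprimitive or primitive maximal subgroup, nor in $A_n$. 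Setting $I=\{\mathcal{C}_k:k\in J\}$, minimality is immediate: removing $\mathcal{C}_k$ leaves a family contained in $S_k\times S_{n-k}$ by (ii), a proper subgroup witnessing minimality at $\mathcal{C}_k$; and (i) together with (iii) rules out every maximal subgroup as a witness against the full family, so $I$ invariably generates.

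\textbf{Main obstacle.} The crux is the explicit construction of the partitions $\lambda^{(k)}$ satisfying (i)--(iii) for an index set of near-optimal size. Condition (ii) favours cycle types with a rich subset-sum structure, whereas (i) requires a single precise gap in the subset sums at $k$; natural candidates involve cycle types built from one long cycle of length $\approx k+1$ together with carefully chosen smaller parts engineered to skip $k$ but cover every other target in $J$. The $\log n$ loss should accommodate unavoidable boundary values of $k$ (very small, close to $n/2$, or with restrictive divisibility) where these requirements cannot be met simultaneously. Enforcing (iii) then requires extra number-theoretic control over cycle lengths — mixing parities to avoid $A_n$, using coprime lengths to defeat block systems, and exploiting the sparsity of primitive maximal subgroups — and verifying it uniformly across the family is the main technical hurdle.
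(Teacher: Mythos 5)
Your skeleton matches the paper's (minimality gives pairwise non-conjugate maximal ``witness'' subgroups for the upper bound; partitions with prescribed partial-sum gaps for the lower bound), but both halves have genuine gaps exactly where the paper's real work lies.

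For the upper bound, your injectivity argument is correct, but the step ``primitive (bounded by $O(\log n/\log\log n)$, via known bounds on the primitive maximal subgroups of $S_n$)'' is not a citable fact, and it is precisely the main content of the paper. The best general result available (Liebeck--Shalev) bounds the total number of conjugacy classes of maximal subgroups of $S_n$ only by $\frac n2+o(n)$, which yields a much weaker upper bound than $\frac n2+\Delta(n)+O(\log n/\log\log n)$. You also discard the crucial refinement that makes the sharper bound possible: in a MIG-set of size $t$, each witness subgroup $M_j$ meets at least $t-1$ non-trivial $S_n$-classes, hence $k(M_j)\geq t$, so in the relevant range one only needs to count maximal subgroups with $k(M)\geq \frac n2$. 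The paper then invokes its companion classification (Theorem 1.2 of Garzoni--Gill, which rests on Fulman--Guralnick class-number bounds) to show the almost simple primitive possibilities are essentially only $S_d$ on $k$-subsets, $\PGammaL_d(q)$ on projective points, and three sporadic entries; counting these requires genuinely nontrivial input, e.g.\ Kane's theorem on Singmaster's problem to bound the number of representations $n=\binom dk$, and a coprimality argument plus bounds on the number of prime divisors of $n-1$ for $n=(q^d-1)/(q-1)$. None of this can be waved through as ``known bounds.''

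For the lower bound, you candidly leave the construction of the partitions $\lambda^{(k)}$ as the ``main obstacle,'' so this half is a plan rather than a proof; the paper's Lemma 2.1 (partitions $\mathfrak p_{i,n}$ admitting every partial sum except $i$ and $n-i$, with a short list of exceptions) and the subsequent multi-scale construction with parameters $\alpha_j,t_j$ constitute the actual content. Moreover, your conditions (i)--(iii) are logically insufficient even as a specification: with $|J|\geq n/2-\log n$ but $|J|<\lfloor n/2\rfloor$, roughly $\log n$ values $i\notin J$ remain, and nothing in (i)--(iii) prevents such an $i$ from being a subset sum of \emph{every} $\lambda^{(k)}$ --- in which case the intransitive subgroup $S_i\times S_{n-i}$ meets all your classes and the family fails to invariably generate. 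This is exactly the difficulty the paper's construction is engineered to overcome: each partition $x_t=(\mathfrak p_{\alpha_j,\alpha_j+t},c_t)$ misses \emph{two} values, $t$ and $\alpha_j$, and the indices removed from the family are precisely the $\alpha_j$, so every $i\leq n/2$ is missed by some surviving class while minimality is retained. Finally, your condition (iii) also needs a concrete mechanism (the paper uses an odd class $x_1$ whose prime-length cycle fixes at least $3$ points, killing primitive overgroups via Jordan's theorem, and a class $z=(1^{n-\ell},\ell)$ with $n/2<\ell<n/2+3$ forcing any block size below $6$), not just ``extra number-theoretic control.''
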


(Logarithms are in base $2$.) It is well known that $\Delta(n)= n^{o(1)}$, therefore Theorem \ref{t: main} implies that $m_I(S_n)$ is asymptotic to $n/2$ as $n\rightarrow \infty$.

The parameter $m_I(G)$ is the ``invariable'' analogue of the parameter $m(G)$, which denotes the largest possible cardinality of a minimal generating set of $G$ (with analogous definition). See Subsection \ref{subsec: context_mIm} for more context.

In \cite{GarLuc}, it was asked whether $m_I(G)\leq m(G)$ holds for every finite group $G$, and it was proved that $m_I(G)=m(G)$ in case $G$ is soluble. In particular, $m_I(S_n)=m(S_n)$ for $n\leq 4$. 

In the upper bound in Theorem \ref{t: main}, we will in fact prove a more explicit estimate, which will have the following consequence.

\begin{corollary}\label{c: main}
If $G=S_n$ with $n\geq 5$, then $m_I(G)<m(G)$.
\end{corollary}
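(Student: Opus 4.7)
The strategy is to combine the explicit form of the upper bound promised in the paragraph preceding the corollary with the classical theorem of Whiston that $m(S_n)=n-1$ for every $n\geq 2$. Granting Whiston's theorem, Corollary \ref{c: main} is equivalent to the statement that $m_I(S_n)\leq n-2$ for all $n\geq 5$.

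The explicit upper bound advertised just before the corollary will take the shape $m_I(S_n)\leq n/2+\Delta(n)+g(n)$, where $g(n)=O(\log n/\log\log n)$ is an explicit function. So the plan is to reduce the corollary to the numerical inequality
\[
\Delta(n)+g(n)\leq \frac{n}{2}-2
\]
for every $n\geq 5$. Since $\Delta(n)=n^{o(1)}$ and $g$ is sublinear, this holds for all $n$ beyond some effective threshold $n_0$, and the remaining finite list of values $5\leq n<n_0$ is checked one at a time using tabulated values of $\Delta(n)$ together with the explicit form of $g(n)$ produced by the proof of Theorem \ref{t: main}.

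The main obstacle lies in this small-$n$ verification. At highly composite values such as $n\in\{6,8,12,24\}$, the term $\Delta(n)$ is comparable with $n/2$, and the bound of Theorem \ref{t: main} leaves only a slim margin below $n-1$; the $O(\log n/\log\log n)$ remainder could even absorb the entire slack if one were not careful. For these cases one must either sharpen the bound by scrutinising the proof of Theorem \ref{t: main} (so as to strip the error term $g(n)$ and tighten the $\Delta(n)$ contribution when $n$ is small, for example by observing that several of the conjugacy classes counted by $\Delta(n)$ collapse or become redundant at small $n$), or else dispatch them by hand via a direct structural analysis of $S_n$, ruling out invariable generating sets of size $n-1$ using the restricted cycle-type structure such a set would be forced to exhibit.
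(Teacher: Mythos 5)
Your overall route is the same as the paper's: combine the explicit version of the upper bound (eq.~\eqref{eq: upper bound}, which gives $m_I(S_n)\leq \floor{n/2}+\Delta(n)+a_n+b_n+c_n-1$) with $m(S_n)=n-1$, prove the resulting numerical inequality for all $n$ past an explicit threshold (the paper uses $n\geq 71$, via $\Delta(n)<2\sqrt n$ and $a_n,b_n,c_n<\log n$), and treat small $n$ separately. One minor simplification you miss: Whiston's CFSG theorem is not needed, since only the lower bound $m(S_n)\geq n-1$ is used, and that is elementary (the adjacent transpositions $(1,2),\dots,(n-1,n)$ form a minimal generating set).

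The genuine gap is that the small-$n$ verification you defer is not a routine tabulated check, and your proposed inequality $\Delta(n)+g(n)\leq n/2-2$ \emph{provably fails} at the critical values: for instance $\Delta(12)=6>12/2-2=4$, and similarly at $n=5,6,8$ (you flag $n=24$ as dangerous, but it in fact passes; the real trouble spots are $n\in\{5,6,8,12\}$). So for those values no sharpening of the error term $g(n)$ can rescue the counting bound, and ``a direct structural analysis'' is not optional but is the substantive remaining content. The paper handles this as follows: for $13\leq n\leq 70$ (indeed all $n\leq 70$ outside $\{5,6,8,12\}$) it verifies computationally that $S_n$ has fewer than $n-1$ conjugacy classes of maximal subgroups, which bounds $m_I$ directly since the subgroups $M_j$ arising from Lemma~\ref{l: basic_obs_mI} are pairwise non-conjugate; for $n\in\{5,8,12\}$ it exhibits one maximal class ($\AGL_1(5)$, $\PGL_2(7)$, $\PGL_2(11)$ respectively) that meets too few pairwise non-equivalent cycle types (where two cycle types are equivalent if one is a power of the other --- an invariable generating set of size $t$ forces $t$ pairwise non-equivalent types); and $n=6$ requires a further two-step argument, first forcing the type $(5)$ into any candidate via $\PGL_2(5)$ and then eliminating the surviving types via $S_3\wr S_2$. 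Your proposal correctly anticipates that something of this kind is needed, but as written it leaves precisely this part --- the only part where the corollary could conceivably fail --- unexecuted.
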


As we shall explain in the next subsection, this was known for large enough $n$.

\subsection{Methods of proof} For a finite group $G$, we denote by $k(G)$ the number of conjugacy classes of $G$. We begin with an elementary observation.

\begin{lemma}
\label{l: basic_obs_mI}
Suppose that $I=\{\mathcal{C}_1,\dots, \mathcal{C}_t\}$ is a set of conjugacy classes of a non-trivial finite group $G$. Then, $I$ is a MIG-set if and only if the following conditions are both satisfied:

\begin{itemize}
    \item[(a)] There exists a set of maximal subgroups $J=\{M_1,\dots, M_t\}$ of $G$ such that, for every $i\neq j$, $\calC_i \cap M_j\neq \emptyset$.
    \item[(b)] No proper subgroup of $G$ has non-empty intersection with $\mathcal{C}_i$ for all $i=1,\dots, t$.
\end{itemize}
\end{lemma}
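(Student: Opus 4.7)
The plan is to unwind the definitions and match them up with the two conditions. I would begin by recording the key preliminary equivalence: for any subset $I' = \{\mathcal{C}_{i_1}, \dots, \mathcal{C}_{i_s}\}$ of conjugacy classes, $I'$ fails to invariably generate $G$ if and only if there exists a maximal subgroup $M < G$ with $M \cap \mathcal{C}_{i_j} \neq \emptyset$ for every $j$. The forward direction takes witnessing elements $x_{i_j} \in \mathcal{C}_{i_j}$ with $\langle x_{i_1}, \dots, x_{i_s}\rangle \subsetneq G$ and enlarges this subgroup to a maximal overgroup $M$; conversely, picking $x_{i_j} \in M \cap \mathcal{C}_{i_j}$ yields $\langle x_{i_1}, \dots, x_{i_s}\rangle \subseteq M \subsetneq G$, exhibiting the failure.

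Given this equivalence, condition (b) is, up to replacing ``proper'' by ``maximal'' (which is harmless by the enlargement argument above), literally the statement that $I$ invariably generates $G$. Condition (a) matches up, one index at a time, with the minimality requirement that $I \setminus \{\mathcal{C}_i\}$ fails to invariably generate $G$ for every $i$: the preliminary equivalence applied to $I' = I \setminus \{\mathcal{C}_i\}$ produces, for each $i$, a maximal subgroup $M_i$ meeting every $\mathcal{C}_j$ with $j \neq i$, and collecting these $M_i$ gives the set $J$ in (a); conversely, the set $J$ in (a) immediately witnesses non-invariable-generation of each $I \setminus \{\mathcal{C}_i\}$.

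Combining the two equivalences yields the lemma, since being a MIG-set means precisely that $I$ invariably generates $G$ while no $I \setminus \{\mathcal{C}_i\}$ does. I expect no genuine obstacle: the argument is pure definition-chasing, and the only point to handle carefully is the reduction from arbitrary proper-subgroup witnesses to maximal-subgroup witnesses, which uses only the finiteness of $G$. (As an incidental sanity check, if (b) holds then the $M_i$ produced in (a) must automatically be pairwise distinct, since coinciding $M_i = M_{i'}$ would meet every class in $I$, contradicting (b).)
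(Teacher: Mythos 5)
Your proof is correct, and since the paper states this lemma without proof (calling it an elementary observation), your definition-chasing argument is exactly the intended one: the preliminary equivalence between failure of invariable generation and the existence of a maximal subgroup meeting all the relevant classes, applied to $I$ itself for (b) and to each $I \setminus \{\mathcal{C}_i\}$ for (a). You also correctly handle the one point requiring care, namely passing between proper-subgroup and maximal-subgroup witnesses via finiteness, and your implicit use of the monotonicity of invariable generation under enlarging the set of classes (which reduces minimality to checking the subsets $I \setminus \{\mathcal{C}_i\}$) is harmless and standard.
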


In particular, with notation as in the previous lemma, we have the following two consequences:
\begin{itemize}
    \item[$\diamond$] For every $j=1, \ldots, t$, $M_j$ has non-empty intersection with at least $t-1$ non-trivial $G$-conjugacy classes, and therefore $k(M_j)\geq t$.
    \item[$\diamond$] For every $i\neq j$, $M_i$ and $M_j$ are not $G$-conjugate.
\end{itemize}

\noindent {\bf The lower bound.} Let us see how Lemma \ref{l: basic_obs_mI} can be exploited to give a lower bound for $m_I(S_n)$. Each conjugacy class $\mathcal{C}$ of $S_n$ corresponds to a particular partition, $X_\mathcal{C}$, of the integer $n$. On the other hand, if $M$ is an \emph{intransitive} subgroup of $S_n$, then $M$ is the stabilizer of some $i$-subset of $\{1,\dots, n\}$. 

We say that \emph{the integer $i$ is a partial sum of the partition $\mathfrak p=(a_1,\dots, a_t)$} if we can write $i=a_{j_1}+a_{j_2}+\cdots + a_{j_\ell}$ for some $1\leq j_1 < \cdots < j_\ell \leq t$. It is clear that the intersection $\mathcal{C}\cap M$ is non-empty if and only if $i$ is a partial sum of $X_\calC$. We will prove the following:

\begin{proposition}\label{p: partitions}
Let $n\geq 5$ be an integer. There is a set $X$ of partitions of $n$ with the following properties:
\begin{enumerate}
    \item There is no integer $1 \leq i \leq n/2$ which is a partial sum in $x$ for every $x \in X$;
    \item For every $x \in X$, there exists an integer $1 \leq i \leq n/2$ which is a partial sum in $y$ for every $y \in X \setminus \{x\}$;
    \item $|X|>\frac 12n - \log n$.
    \end{enumerate}
\end{proposition}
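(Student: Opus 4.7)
I translate conditions (1) and (2) into a concrete question about subset sums. For a partition $x$ of $n$, let $P(x) \subseteq [1, \lfloor n/2 \rfloor]$ denote its set of partial sums landing in the lower half. Conditions (1) and (2) together say that the family $\{P(x) : x \in X\}$ has empty intersection but is minimal with respect to this property; equivalently, for each $x \in X$ there is a distinct ``witness'' $i_x \in [1, \lfloor n/2 \rfloor]$ that lies in $P(y)$ for every $y \ne x$ in $X$ but not in $P(x)$. Thus $x \mapsto i_x$ embeds $X$ into $[1, \lfloor n/2 \rfloor]$, and the task is to exhibit a set $I \subseteq [1, \lfloor n/2 \rfloor]$ with $|I| > n/2 - \log n$ and, for each $j \in I$, a partition $\pi_j$ of $n$ whose set of partial sums contains $I \setminus \{j\}$ and omits $j$, and such that the collection of ``missed values'' covers all of $[1, \lfloor n/2\rfloor]$ (the latter secures condition (1) at witnesses outside $I$).

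The main construction of $\pi_j$ is the template
\[
\pi_j \;=\; \bigl(\,1^{\,j-1},\; j+1,\; r_1, r_2, \ldots, r_s\,\bigr),
\]
with each $r_i \ge j+1$ and $\sum_i r_i = n - 2j$. The initial block $(1^{j-1},\,j+1)$ has subset sums exactly $[0, 2j] \setminus \{j\}$, and the lower bound $r_i > j$ prevents any further subset sum from equalling $j$. A Sylvester-style greedy choice of the $r_i$ (starting with $r_1 = j+1$ and extending by the largest allowable part at each step) yields a partition whose subset sums fill $[0, n] \setminus \{j, n-j\}$, so long as the ``budget'' $n - 2j$ is large enough to accommodate an almost-complete sequence; this handles $j$ in a wide middle range. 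For the smallest values of $j$, I switch to a dual template with distinct parts $(2, 3, \ldots, t, r)$, exploiting the identity that $(2, 3, \ldots, t)$ has subset sums $\{0\}\cup[2, N-2]\cup\{N\}$ with $N = t(t+1)/2 - 1$: skipping $j$ from $(2, \ldots, t)$ and choosing $r$ suitably gives a partition of $n$ whose partial sums are $[0, n] \setminus \{j, n-j\}$ (with minor modifications for $j=1,2$, the cases where $j$ cannot simply be skipped from the sequence).

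The main obstacle is the small set of exceptional $j$'s for which neither template works (as does occur for $(n, j) = (10, 2)$ and $(10, 4)$, where the constraints of realizing $I \setminus \{j\}$ as subset sums of a partition of $n$ force $j$ itself to be a subset sum). These bad $j$ are placed in an exceptional set $B \subseteq [1, \lfloor n/2 \rfloor]$, and the crux of the proof is to show $|B| < \log n$ by a counting argument tracking the arithmetic of $n$ (essentially, that each obstruction to avoiding $j$ can be pinned to one of $O(\log n)$ binary/divisibility classes). Once $B$ is controlled, setting $I = [1, \lfloor n/2 \rfloor] \setminus B$ gives $|X| = |I| > n/2 - \log n$, yielding (3); condition (1) is then secured by slightly perturbing each $\pi_j$ (using the flexibility to enlarge the set of missed partial sums beyond $\{j, n-j\}$, which the template readily allows via the freedom in choosing the $r_i$) so that every $i \in B$ is missed by at least one $\pi_j$. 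Conditions (2) and the containment of $I\setminus\{j\}$ in $P(\pi_j)$ are preserved throughout.
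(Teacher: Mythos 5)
Your reduction asks, for each $j$ in a large set $I \subseteq [1, \lfloor n/2\rfloor]$, for a partition $\pi_j$ of $n$ whose partial sums are exactly $[0,n] \setminus \{j, n-j\}$, and the whole count then rests on your claim that the exceptional set $B$ has size $O(\log n)$. That claim is false: for \emph{every} integer $j$ with $n/3 < j < n/2 - 1$ no such partition exists, so $|B| \geq n/6 - O(1)$ is linear in $n$, and $|X| = |I| \approx n/3$ falls far short of $n/2 - \log n$. Here is why such $j$ are impossible: any subset with sum at most $j-1$ uses only parts of size less than $j$; a sorted greedy argument (if $[1,s]$ is reachable and the next part is at most $s+1$, the reachable set stays an interval) forces the parts of size less than $j$ to sum to exactly $j-1$, since a total of $j$ or more would make $j$ itself reachable; no part equals $j$; to realize the required sum $j+1$ one then needs a part equal to exactly $j+1$; but the remaining mass $n - (j-1) - (j+1) = n - 2j$ is positive (as $j < n/2$) yet smaller than $j+1$ (as $3j \geq n+1$), while every remaining part must exceed $j$ --- a contradiction. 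This is precisely why the paper's key lemma on $\mathfrak p_{i,n}$ carries the hypothesis $i < n/3$. Your two exceptional examples are of different kinds and you conflated them: $(n,j)=(10,2)$ belongs to the sporadic family $n = 4j+2$ (where $n/2$ is additionally missed, as the paper's lemma records), whereas $(10,4)$ is the first instance of the entire forbidden interval $(n/3, n/2)$, which your proposal misdiagnoses as a sparse set of arithmetic accidents. Your template itself signals the problem: it requires parts $r_i \geq j+1$ with $\sum_i r_i = n - 2j$, which is unsatisfiable as soon as $n - 2j < j+1$.

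The idea you are missing --- and the true source of the $\log n$ loss in item (3) --- is the paper's treatment of witnesses $t \in (n/3, n/2)$: it uses composite partitions $x_t = (\mathfrak p_{\alpha,\, \alpha + t},\, c_t)$ with $c_t = n - \alpha - t > t$, which necessarily miss a \emph{second} value $\alpha < n/3$ besides $t$ and $n-t$. The auxiliary values are chosen dyadically, $\alpha_j = \lceil n/(2^{j-1}\cdot 6) - 1\rceil$, so that a single $\alpha_j$ serves all $t$ in a window $(t_{j-1}, t_j]$ with $t_j \to n/2$; only about $\log(n/6)$ distinct $\alpha$'s are needed, and the partitions $x_{\alpha_j}$ are then deleted from $X$ (their would-be private witnesses $\alpha_j$ are missed by many other members, violating the minimality condition (2)). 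That deletion of roughly $\log(n/6)$ elements is exactly where the bound $|X| > n/2 - \log n$ comes from, and the paper's Question on whether $n/2 - \log n + O(1)$ is optimal suggests this loss is genuine rather than an artifact. Without some device of this kind --- a reusable second missed value covering the $\sim n/6$ witnesses in $(n/3, n/2)$ at a total cost of only $O(\log n)$ partitions --- your outline cannot be repaired to give the stated bound.
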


Proposition~\ref{p: partitions} is almost enough to yield the lower bound in Theorem~\ref{t: main} straight away. To complete the proof of that lower bound, we must take care of Lemma \ref{l: basic_obs_mI}(b) for proper transitive subgroups of $S_n$.

Our feeling is that the construction we give in our proof of Proposition~\ref{p: partitions} is pretty close to being as large a set $X$ as is possible.

\begin{question}
\label{q: large_partitions}
Is it true that the largest cardinality of a set $X$ of partitions of $n$ satisfying properties (1) and (2) of Proposition~\ref{p: partitions} is at most $\frac 12n-\log n+O(1)$?
\end{question}

Note that we certainly have $|X|\leq \frac 12n$.
\vspace{3pt}

\noindent {\bf The upper bound.} 
In view of Lemma \ref{l: basic_obs_mI} and of the considerations following it, it is clear that the upper bound in Theorem \ref{t: main} will be established once we prove the following result.\footnote{The idea we are using here is laid out explicitly in \cite{GarLuc}: For every maximal subgroup $M$ of $G$, denote by $M^*$ the set of $G$-conjugacy classes having non-empty intersection with $M$. Let 
\[
\mathcal M(G)=\{M^*\mid M \text { maximal subgroups of $G$}\}.
\]
We say that a subset $\{X_1,\dots,X_t\}$ of $\mathcal M(G)$ is \textit{independent} if, for every  $1\leq i\leq t$, the intersection $\cap_{j\neq i}X_j$ properly contains $\cap_j X_j$. We denote by $\iota(G)$ the largest cardinality of an independent subset of $\mathcal M(G)$. It is not hard to see, first, that $m_I(G)\leqslant\iota(G)$ (\cite[Lemma 4.2]{GarLuc}) and, second, that Proposition~\ref{p: upper} yields an upper bound for $\iota(S_n)$.}

\begin{proposition}\label{p: upper}
Suppose that $\{M_1,\dots, M_t\}$ is a set of maximal subgroups of the symmetric group $S_n$ such that (a) $k(M_i)\geq \frac 12n$ for every $i$; (b) if $i\neq j$, then $M_i$ and $M_j$ are not $S_n$-conjugate. Then
\begin{equation}\label{e: pp}
t\leq \frac{n}{2}+\Delta(n)+O\left(\frac{\log n}{\log \log n}\right),
\end{equation}
where $\Delta(n)$ is the number of divisors of $n$.
\end{proposition}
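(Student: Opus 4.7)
\emph{Plan.} The strategy is to classify each $M_i$ using the standard trichotomy for maximal subgroups of $S_n$ (intransitive, transitive imprimitive, or primitive), and to bound the contribution of each class separately, using (b) throughout and (a) only in the primitive case.

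Each intransitive maximal subgroup is $S_n$-conjugate to $S_k\times S_{n-k}$ for a unique $1\le k<n/2$; by (b) the values of $k$ arising from the $M_i$ are distinct, so intransitive $M_i$ contribute at most $\lfloor (n-1)/2\rfloor$ to $t$. Each transitive imprimitive maximal subgroup is $S_n$-conjugate, in its imprimitive action, to $S_a\wr S_b$ for some factorization $n=ab$ with $a,b>1$; this family contributes at most $\Delta(n)-2$. Both of these counts follow immediately from the classification of intransitive and imprimitive maximal subgroups of $S_n$, and use only (b).

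The remaining $M_i$ are primitive; among these, $A_n$ contributes at most $1$. For primitive $M\ne A_n$, the plan is to invoke the O'Nan--Scott theorem and handle each of its families (affine, almost simple, product action, simple diagonal, twisted wreath) using (a) together with known order and class-number bounds. The key inputs I expect to use are: Mar\'oti's bound $|M|\le n^{1+\log_2 n}$ for primitive $M\not\supseteq A_n$; class-number bounds of the form $k(M)\le |M|^{1-\varepsilon}$ available for simple and almost simple $M$ (Fulman--Guralnick, Liebeck--Pyber); and, for the product-action family $S_a\wr S_b$ acting on $a^b$ points, the fact that there are at most $\lfloor \log_2 n\rfloor$ pairs $(a,b)$ with $a^b=n$ and $b\ge 2$. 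Assembling these, we expect the primitive contribution to be $O(\log n/\log\log n)$.

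Summing the three contributions yields
\[
t \;\le\; \tfrac{n-1}{2} + (\Delta(n)-2) + 1 + O\!\left(\tfrac{\log n}{\log\log n}\right) \;=\; \tfrac{n}{2} + \Delta(n) + O\!\left(\tfrac{\log n}{\log\log n}\right),
\]
which is the claimed bound. The main obstacle will be the primitive case: quantitatively combining the various O'Nan--Scott family bounds to yield precisely the error term $O(\log n/\log\log n)$ requires careful attention to borderline almost simple actions such as $\PSL_2(q)$ acting on $q+1$ points, where $k(M)\approx n/2$ and hence (a) does not eliminate them.
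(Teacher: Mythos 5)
Your outer decomposition (intransitive, imprimitive, primitive) is the same as the paper's, and your counts for the first two families are right: at most $\lfloor n/2\rfloor$ conjugacy classes of intransitive maximal subgroups and $\Delta(n)-2$ of imprimitive ones, using only hypothesis (b). The genuine gap is in the primitive case. Your plan — Mar\'oti's order bound plus generic class-number bounds $k(M)\leq |M|^{1-\varepsilon}$ — cannot work, because hypothesis (a) does \emph{not} eliminate the problematic primitive subgroups; whole infinite families of almost simple maximal subgroups satisfy $k(M)\geq n/2$ and therefore survive. For instance $S_d$ acting on $k$-subsets has $n=\binom{d}{k}\leq d^k$, while $k(S_d)=p(d)$ grows superpolynomially in $d$, so $k(S_d)\geq n/2$ whenever $d$ is large relative to $k$; similarly $\PGammaL_d(q)$ on the $(q^d-1)/(q-1)$ points of projective space survives. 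What is actually needed is (i) a classification of the almost simple primitive $M\leq S_n$ with $k(M)\geq \frac{n}{2}$ — the paper invokes \cite[Theorem 1.2]{garzonigill} for this, which does rest on Fulman--Guralnick class-number bounds but is far more specific than $k(M)\leq|M|^{1-\varepsilon}$ — and (ii) number-theoretic bounds on how many surviving subgroups can coexist for a single $n$: the number $a_n$ of pairs $(q,d)$ with $(q^d-1)/(q-1)=n$ is at most the number of distinct prime divisors of $n-1$ (two such representations force coprime $q$'s), hence $O(\log n/\log\log n)$; and the number $b_n$ of representations $n=\binom{d}{k}$ is the Singmaster problem, where even boundedness is open and the best known estimate, due to Kane \cite{kane2}, is $O(\log n\log\log\log n/(\log\log n)^3)$. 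None of this appears in your proposal; the sentence ``assembling these, we expect the primitive contribution to be $O(\log n/\log\log n)$'' is exactly the part that requires proof, and your closing remark about $\PSL_2(q)$ on $q+1$ points shows you sensed the obstruction without having the tools to remove it — the resolution is not to eliminate such groups but to count, for fixed $n$, how many inequivalent ones there are.

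A secondary quantitative issue: your bound of $\lfloor\log_2 n\rfloor$ on the number of pairs $(a,b)$ with $a^b=n$, $b\geq 2$, would only give an error term $O(\log n)$, which does not match the claimed $O(\log n/\log\log n)$. The paper refines this: if $n=p_1^{a_1}\cdots p_r^{a_r}$ then $b$ must divide $\gcd(a_1,\dots,a_r)\leq \log n$, so the count is at most the number of divisors of an integer of size at most $\log n$, which is $\exp_2\{(1+o(1))\log\log n/\log\log\log n\}=(\log n)^{o(1)}$. Finally, note that the diagonal-type primitive subgroups are disposed of not by generic order bounds but by the fact that they always satisfy $k(M)<\frac{n}{2}$ (\cite[Theorem 1.1]{garzonigill}) — another place where the specific classification, rather than a soft estimate, carries the argument.
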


The main point in the proof of Proposition~\ref{p: upper} is to deal with the family of almost simple primitive subgroups of $S_n$; see Theorem \ref{p: locally valid}. The key ingredient is \cite[Theorem 1.2]{garzonigill}, which determines the almost simple primitive subgroups $G$ of $S_n$ such that $k(G)\geq \frac 12n$.

We remark that a theorem of Liebeck and Shalev gives a general upper bound for the number of conjugacy classes of maximal subgroup of $S_n$ of the form $\frac n2+o(n)$ \cite{lisha}. This immediately gives an upper bound for $m_I(S_n)$ and, in light of the easy fact that $m(S_n)\geq n-1$, yields Corollary~\ref{c: main} provided $n$ is large enough (this was observed also in \cite{GarLuc}). We note that in Proposition \ref{p: upper} we do not use \cite{lisha}, but we use \cite{garzonigill}, which relies on upper bounds for the number of conjugacy classes of almost simple groups of Lie type by Fulman--Guralnick \cite{FG1}.

We remark, moreover, that although Proposition~\ref{p: upper} only states an upper bound for the number of maximal subgroups with at least $\frac12 n$ conjugacy classes, results in \S\ref{s: upper} outline specific families of maximal subgroups. In particular, the first two terms of \eqref{e: pp} correspond to the intransitive and imprimitive subgroups of $S_n$, respectively.

This is important because our original aim in this paper was to prove that $|m_I(G)-\frac n2| =O(\log n)$. We have managed this with the lower bound but not with the upper, precisely because $\Delta(n)-2$, which is the number of conjugacy classes of maximal imprimitive subgroups of $S_n$, is not $O(\log n)$. To achieve our original aim, it would be sufficient to establish that, in the following question, $t\leq \frac n2+O(\log n)$. We state the question in terms of properties of $S_n$ -- it is easy enough to recast it as a number-theoretic question concerning partitions, similar to Question \ref{q: large_partitions} above.\footnote{Yet another way to think of this question uses the terminology of the previous footnote. We are effectively asking the following: Let $t$ be a positive integer and let $M_1,\dots, M_t$ be maximal subgroups of $S_n$ that are either intransitive or imprimitive. If $\{M_1^*, \ldots, M_t^*\}$ is independent, then how large can $t$ be?
}
\begin{question}
\label{q: partition_imprimitive}
For a positive integer $n$, how large can $t$ be such that we can find sets with the following properties?
\begin{enumerate}
    \item  $\{\calC_1,\dots, \calC_t\}$ is a set of conjugacy classes of $S_n$;
    \item $\{M_1,\dots, M_t\}$ is a set of maximal subgroups of $S_n$, all of which are intransitive or imprimitive;
    \item For $i=1,\dots, t$, $\calC_i\cap M_i=\emptyset$;
    \item For $i,j=1,\dots, t$, if $i\neq j$, then $\calC_i\cap M_j\neq \emptyset$.
\end{enumerate}
\end{question}

Proposition \ref{p: partitions} shows that $t>n/2-\log n$. In truth, we believe that, at least for large enough $n$, a MIG-set of $S_n$ of size $m_I(S_n)$ should involve only intransitive subgroups (in the sense that the set $J$ from Lemma \ref{l: basic_obs_mI} should contain only intransitive subgroups). This would imply that $m_I(S_n)\leq \frac n2$, and the problem of determining $m_I(S_n)$ would be reduced to the purely combinatorial problem addressed in Proposition \ref{p: partitions} and Question \ref{q: large_partitions}.

\subsection{Context}
\label{subsec: context_mIm}
The concept of invariable generation was introduced by Dixon, with the motivation of recognizing $S_n$ as the Galois group of polynomials with integer coefficients \cite{Dix92}. See for instance Kantor--Lubotzky--Shalev \cite{KLS} for interesting results related to invariable generation of finite groups.

In \cite{GarLuc}, the invariant $m_I(G)$ was introduced and studied. This is the ``invariable'' version of the invariant $m(G)$, which is the largest possible cardinality of a minimal generating set of $G$. 
See Lucchini \cite{min, min2} for results concerning $m(G)$ where $G$ is a general finite group.

Assume now $G=S_n$. It is easy to see that $m(S_n)\geq n-1$, by considering the $n-1$ transpositions $(1,2), \ldots, (n-1,n)$. Using CFSG, Whiston \cite{whi} proved that in fact $m(S_n)=n-1$. But more is true. Cameron--Cara \cite{Caca} showed that a minimal generating set of $S_n$ of size $n-1$ is very restrictive: either it is made of $n-1$ transpositions, or it is made of a transposition, some $3$-cycles, and some double transpositions (see \cite[Theorem 2.1]{Caca} for a precise statement).

One can hardly hope for a similar ``elegant'' result for $m_I(S_n)$, for the simple reason that a minimal invariable generating set of $S_n$ of size $t$ must contain $t$ distinct partitions which do not have a common partial sum. Still, it is true that in the proof of the lower bound in Theorem \ref{t: main}, we feel somewhat restricted about the choice of the relevant partitions -- but we are not able to make any precise statement in this direction.

In \cite{GarLuc}, it was shown that, if $G$ is a finite soluble group, then $m_I(G)=m(G)$, which in turn is equal to the number of complemented chief factors in a chief series of $G$. Moreover, it was asked whether $m_I(G)\leq m(G)$ is true for every finite group. It seems that, ``often'', for a finite non-soluble group $G$, the strict inequality $m_I(G)<m(G)$ holds. Corollary \ref{c: main} confirms this feeling in case $G=S_n$.

We recall, however, that $m_I(\PSL_2(p))=m(\PSL_2(p))$ for infinitely many primes $p$ (see \cite[Section 5]{GarLuc}).

\subsection{Structure of the paper and notation} In \S\ref{s: lower} we prove the lower bound on $m_I(G)$ given in Theorem~\ref{t: main}. In \S\ref{s: upper} we prove the upper bound on $m_I(G)$ given in Theorem~\ref{t: main}, along with Corollary~\ref{c: main}.

We will use exponential notation for partitions, so the partition $(a_1^{n_1}, a_2^{n_2}, \ldots, a_t^{n_t})$ has $n_1$ parts of length $a_1$, $n_2$ parts of length $a_2, \ldots$, and $n_t$ parts of length $a_t$. For a positive real number $x$, $\log(x)$ denotes a logarithm in base $2$. For a positive integer $x$, $\Delta(x)$ denotes the number of divisors of $x$.

\section{The lower bound}\label{s: lower}

In this section we prove the lower bound in Theorem \ref{t: main}. We first prove a lemma, then we prove Proposition ~\ref{p: partitions}, and finally we give a proof for the lower bound.


\begin{lemma}
\label{cruc}
Let $n$ and $i$ be positive integers, with $i < n/3$. Then there exist a partition $\mathfrak p_{i,n}$ of $n$ with the following properties:
\begin{enumerate}
    \item If $n \neq 4i+2$ and $(n,i) \neq (8,1)$, then $\mathfrak p_{i,n}$ does not have $i$ and $n-i$ as partial sums, and everything else is a partial sum.
    \item If $n=4i+2$ or $(n,i)=(8,1)$, then $\mathfrak p_{i,n}$ does not have $i, n-i$ and $\frac{n}{2}$ as partial sums, and everything else is a partial sum.
\end{enumerate}
\end{lemma}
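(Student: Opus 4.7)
The proof is by explicit construction. The key structural observation is this: if every part of a partition of $n$ lies in $\{1\} \cup [i+1, \infty)$ and there are at most $i-1$ parts equal to $1$, then $i$ cannot be a partial sum, since any non-empty subset of parts either consists entirely of ones (summing to at most $i-1$) or contains a part of size at least $i+1$. Since the set of partial sums of a partition is closed under $s \mapsto n-s$, this automatically forces $n-i$ to be non-realised as well. Thus the task reduces to producing a partition of this shape whose partial sums realise every other integer in $\{0, 1, \ldots, n\}$ (or, in the exceptional case, every integer except $n/2$).

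The base building block is $B_a := ((i+1)^a, 1^{i-1})$, whose partial sums form the union $\bigcup_{b=0}^a [b(i+1), b(i+1)+i-1]$ and leave precisely the $a$ singleton gaps $\{i + k(i+1) : 0 \leq k \leq a-1\}$. Two values of $a$ give the lemma directly. For $a = 2$, so $n = 3i+1$, the two gaps are exactly $\{i, n-i\}$, handling case (1); for $a = 3$, so $n = 4i+2$, the three gaps are $\{i, 2i+1, 3i+2\} = \{i, n/2, n-i\}$, handling case (2).

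For all other $n$, I would write $n = q(i+1) + r$ with $0 \leq r \leq i$ and modify $B_a$ by replacing one or two copies of $(i+1)$ with larger parts (each $\geq i+1$, so the key observation still applies). A routine calculation shows that, provided $r \neq i-1$ and $i \geq 2$, a single large part $A \in \{i+2, \ldots, 2i+1\}$ with $A \equiv r+2 \pmod{i+1}$ does the job: because $A$ is not a multiple of $i+1$, shifting the gap pattern $\{i + k(i+1)\}$ by $A$ yields a set disjoint from that pattern, so every intermediate gap of $B_a$ is filled, leaving only $\{i, n-i\}$ missing. Concretely this gives $((i+2), (i+1)^{q-1}, 1^{i-1})$ for $r = i$, $((i+3), (i+1)^{q-2}, 1^{i-1})$ for $r = 0$, and $((2i+1), (i+1)^{q-2}, 1^{i-1})$ for $r = i-2$, with analogous formulas for the intermediate residues.

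The main obstacle is the case $r = i-1$ with $q \geq 4$: here the congruence condition forces $A$ to be a multiple of $i+1$, so no single large part works, and two must be introduced. I would take $((2i+1), (i+2), (i+1), 1^{i-1})$ when $q = 4$ and $((2i+3), (2i+1), (i+1)^{q-4}, 1^{i-1})$ when $q \geq 5$, verifying in each case that the combined shifts cover every intermediate gap while the outer gaps at $i$ and $n-i$ survive. The small-$i$ regime ($i = 1$, where ``at most $i-1$ ones'' means \emph{no} ones) needs its own constructions, for instance $(3, 2^{(n-3)/2})$ when $n$ is odd and $(3, 3, 2^{(n-6)/2})$ when $n$ is even and $\geq 10$. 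Finally, the isolated exception $(n,i) = (8,1)$ is handled by direct check: no partition of $8$ has missing set exactly $\{1, 7\}$, but $(3, 3, 2)$ has partial sums $\{0, 2, 3, 5, 6, 8\}$ and so misses $\{1, 4, 7\} = \{i, n/2, n-i\}$, matching case (2). In every case, verification reduces to a routine union-of-intervals calculation.
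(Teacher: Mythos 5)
Your construction is correct --- I checked the single-replacement case in general, the two-part constructions for $r=i-1$ (both $q=4$ and $q\geq 5$), and the separate $i=1$ recipes, and in each case the missing partial sums are exactly $\{i,n-i\}$ (resp.\ $\{i,n/2,n-i\}$ in the exceptional cases) --- but your route differs from the paper's in how the partition is specified and how the verification is run. The paper fixes one template $\mathfrak p_{i,n}=(1^{i-1},\, i+1,\, (i+2)^j,\, (i+1)^k,\, c)$ with $j\in\{0,1\}$ and $i+1\leq c\leq 2i+1$, determines $j,k,c$ by a greedy loop that absorbs the residue of $n$ into the closing part $c$ (with the single exceptional partition $(1^{i-1}, i+1, i+3, i+1)$ when $n=4i+4$), and then proves that every $2i\leq m< n/2$ is a partial sum by induction on $m$, exchanging parts ($1$'s, an $i+1$, the $i+2$) to pass from $m$ to $m+1$. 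You instead write $n=q(i+1)+r$ and choose the non-$(i+1)$ parts by a congruence condition modulo $i+1$, verifying everything through the union-of-intervals picture $\bigcup_S [\sigma(S),\sigma(S)+i-1]$; this makes the obstruction structural and visible --- a single part $A$ works exactly when $A\not\equiv 0 \pmod{i+1}$, i.e.\ $r\neq i-1$, and the exceptional sets $\{i,n/2,n-i\}$ appear transparently as the gap pattern of $B_3$ when $n=4i+2$ --- whereas the paper's induction is shorter but less explanatory about why precisely these values are the gaps. Your two routes produce overlapping but not identical partitions: your $r=0$ recipe $(i+3,(i+1)^{q-2},1^{i-1})$ recovers the paper's exceptional $n=4i+4$ partition, while your hard-case partition $(2i+3,\,2i+1,\,(i+1)^{q-4},\,1^{i-1})$ uses a part of size $2i+3$, outside the band $[i+1,2i+1]$ to which the paper confines itself; also, the paper's uniform construction covers $i=1$ with no separate branch (apart from $(8,1)$), whereas you treat $i=1$ by hand --- correctly, and indeed necessarily for your hard-case recipe, since $(5,3,2^{q-4})$ misses $n/2$ when $i=1$. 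One small caveat: your one-line justification in the single-replacement case (``shifting the gap pattern by $A$ yields a set disjoint from it'') addresses only the gaps of the unshifted block; the gaps of the shifted copy and the two range boundaries require the symmetric check (the topmost shifted gap is exactly what produces $n-i$), but this is the same routine computation and it does go through.
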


\begin{proof}
Define
\[
\mathfrak p_{i,n} = (1^{i-1}, i+1, (i+2)^j, (i+1)^k, c)
\]
where $j \in \{0,1\}$, $k \geqslant 0$, and $i+1 \leqslant c \leqslant 2i+1$. To complete the definition we must specify $j,k$ and $c$. To do this we consider a partial sum $q$, adding from left to right: we first sum the $1$'s and $(i+1)$ to obtain $q=2i$. Now there are three cases:

\begin{enumerate}
 \item If $n-q\leqslant 2i+1$, then we set $c=n-q$.
 \item If $n-q=2i+2$, then we set $k=1$, $j=0$ and $c=i+1$.
 \item If $n-q\geqslant 2i+3$, then we set $j=1$, $k=0$ and set $q=2i+(i+2)=3i+2$.
\end{enumerate}

In the first and second cases, we are done; notice that the partition has the stated properties (in the first case we use the fact that $i<n/3$ to obtain $i+1\leqslant c\leqslant 2i+1$ as required). If we are in the third case, then we proceed in a loop as follows:

\begin{enumerate}
 \item If $n-q\leqslant 2i+1$, then we set $c=n-q$.
 \item If $n-q\geqslant 2i+2$, then we set $k=k+1$ and set $q=q+(i+1)$.
\end{enumerate}

It turns out that there is one situation -- when $n=4i+4$ and $(n,i) \neq (8,1)$ -- where our definition needs to be adjusted. In this case, we make the following definition:
\[
\mathfrak p_{i,n}=(1^{i-1}, i+1, i+3, i+1).
\]
Now our definition is complete. We now let $m$ be an integer such that $1 \leqslant m \leqslant n/2$ and we study when $m$ is a partial sum of $\mathfrak{p}_{i,n}$ with a view to proving items (1) and (2) of the lemma. 

Both items are clear for $m \leqslant 2i$, 
thus we may assume that $m \geqslant 2i+1$. In particular this means that $n\geq 4i+2$.

If $n=4i+2$, then we are in item (2) of the lemma, $\mathfrak p_{i,n}=(1^{i-1}, (i+1)^3)$, and the statement holds. 

If $n\geqslant 4i+3$, note first that $j=1$. Suppose, first, that $k=0$. There are two possibilities: first, if $c\neq i+2$, then $\mathfrak p_{i,n} = (1^{i-1}, i+1, i+2, c)$, and the statement holds. If instead $c=i+2$, then $n=4i+4$. If $(n,i)=(8,1)$, then $\mathfrak p_{1,8}=(2,3,3)$ and we are in item (2) of the lemma. Otherwise, we are in the exceptional case in our definition where $\mathfrak p_{i,n}=(1^{i-1}, i+1, i+3, i+1)$, and the result holds.

We are left with the case in which $k\geqslant1$, i.e. $\mathfrak p_{i,n}$ contains at least two $(i+1)$'s (excluding $c$ which may also equal $i+1$). 

We work here by induction on $m$: assuming that some $2i \leqslant m < n/2$ can be written \textit{without using c}, we want to show that the same holds for $m+1$. Since $m \geqslant 2i \geqslant i+1$, in writing $m$ without $c$ we have certainly used at least one of $i+1$ and $i+2$. We now divide into three cases.

\begin{enumerate}
\item In writing $m$ we have not used all $1$'s. Then add a $1$.
\item In writing $m$ we have not used $i+2$. Then remove an $i+1$ and add $i+2$.
\item In writing $m$ we have used all $1$'s and $i+2$. Suppose, first, that at least two $(i+1)$'s have not been used; then remove all $1$'s, remove $i+2$ and add two $(i+1)$'s and we are done. On the other hand, suppose (for a contradiction) that in writing $m$ as a partial sum all but one of the $i+1$'s have been used. Then $c+(i+1)> n/2$ and, since $c\leqslant 2i+1$, we obtain that $n/2< 3i+2$. However the partial sum $m$ has used all $1$'s, one $i+1$ and one $i+2$, so $m\geq 3i+2$. Since $m<n/2$, we get $n/2>3i+2$, which is a contradiction.\qedhere
\end{enumerate}
\end{proof}

\subsection{Proof of Proposition \ref{p: partitions}} Now we prove Proposition \ref{p: partitions}. The proof we give below is constructive -- we define an explicit set $X$ with the given properties. 
We have decided not to define the set $X$ outside of this proof, as the construction is built up in pieces as the proof proceeds. 

In deducing the lower bound in Theorem \ref{t: main}, we will be interested in the properties of the partitions of $X$ listed in the statement of Proposition \ref{p: partitions}, rather than their explicit construction. The paragraphs involving exceptions to this are labelled {\bf (C1)}, {\bf (C2)}, {\bf (C3)} and {\bf (C4)} in the following proof.

\begin{proof}[Proof of Proposition \ref{p: partitions}]
Throughout the proof, we will use the notation $\mathfrak p_{t,n}$ to refer to the partitions in the statement of Lemma \ref{cruc} (so we allow any partition having the properties of the statement).

If $5\leq n\leq 10$, we have $n/2-\log n <2$, and the statement is easy to check. Therefore assume $n\geq 11$.

{\bf (C1)} For $n=11$, we set $x_1=(2^2,3,4)$, $x_2=(1,3^2,4)$, $x_3=(1^2,9)$. For $n=12$, we set $x_1=(2^2,3,5)$, $x_2=(1,3,4^2)$, $x_3=(1^2,10)$.  The statement holds by setting $X=\{x_1, x_2, x_3\}$.



From now on we assume that $n > 12$. This has the advantage that in the proof that follows, all partitions of the form $\mathfrak p_{i,\ell}$ that we consider will have $\ell>8$ or $i >1$, and so we need not worry about the case $(\ell,i)=(8,1)$ mentioned in Lemma~\ref{cruc}.

For $1 \leqslant t < n/3$, define
\[
x_t=\mathfrak p_{t,n}.
\]
{\bf (C2)} We want to modify partition $x_1$. Namely, define
\[
x_1=
\begin{cases}
(3,5,2^k,4^j) & \text{if $n=14,16$}\\
(3,2^k,4^j) & \text{if $n=13,15,17$}\\
(3^\ell,7,2^k,4^j) & \text{if $n\geq 18$} 
\end{cases}
\]
where $\ell=1$ or $2$ according to whether $n$ is even or odd, and $j\in \{0,1\}$ is defined by the condition that $x_1$ has an odd number of cycles of even length (and $k$ is consequently uniquely defined). It is easy to see that, in every case, every $2 \leq i \leq n/2$ is a partial sum in $x_1$.


Now, in order to go further, we will use a slightly different method. The partitions we are going to define will depend on a parameter $j$. We could define all of them at once, but to give an idea of the overall strategy, let us go through the first step explicitly. 

We set $\alpha_1= \ceil{n/6-1}$ and, for every integer $\ceil{n/3} \leqslant t \leqslant t_1=5n/12$ we define
\[
x_{t} = (\mathfrak p_{\alpha_1, \alpha_1+t}, c_{t}).
\]
where $c_{t} = n-\alpha_1-t$. Let us justify this definition:
\begin{enumerate}
 \item[(a)] Observe first that $1 \leq \alpha_1<(\alpha_1+t)/3$ hence the partition $\mathfrak p_{\alpha_1, \alpha_1+t}$ is well-defined.
 \item[(b)] Next note that
\begin{align*}
  &c_{t}=n-\alpha_1-t \\
  &\quad > n-\left\lceil{\frac{n}{6}-1}\right\rceil-\frac{5n}{12} \\
  &\quad\quad\quad >\frac{5n}{12}\geq t > \alpha_1,
 \end{align*}
and so $\alpha_1$ and $t$ are not partial sums of $x_{t}$.
\item[(c)] We can easily check that either $4\alpha_1+2 > \alpha_1+t$ or else $(\alpha_1, t, n) = (1,5,12)$. The second possibility is excluded by our assumption $n > 12$. The first possibility implies that Lemma~\ref{cruc}(1) holds, and so all numbers up to $\alpha_1+t$ are partial sums, apart from $\alpha_1$ and $t$.  
\item[(d)] Finally observe that
\begin{align}\label{n2}
 \alpha_1+t &\geq \left\lceil{\frac{n}{6}-1}\right\rceil + \left\lceil{\frac{n}{3}}\right\rceil \\
 &\geqslant \left\lceil{\frac{n}{2}-1}\right\rceil.\nonumber
\end{align}
We conclude that all numbers up to $n$ are partial sums in $x_t$ apart from $\alpha_1$, $t$ and (possibly) $n/2$. In fact, checking \eqref{n2} more carefully, it is clear that $\alpha_1+t\geq \lfloor n/2 \rfloor$ unless $n\equiv 0\pmod 6$ and $t=n/3$.
\end{enumerate}

{\bf Conclusion~1}: For $n/3<t\leq 5n/12$, the partition $x_t$ admits all partial sums up to $n$ except $\alpha_1$ and $t$.

{\bf Conclusion~2}: For $t=n/3$, the partition $x_t$ admits all partial sums up to $n$ except $\alpha_1$ and $t$ and (if $n$ is even) $n/2$.

Now our aim is to extend this definition to other parameters $t_i$ that are larger than $t_1$. More precisely, for integer $1 \leqslant j \leqslant \log(n/6)$ we define
\[
\alpha_j = \left\lceil{\frac{n}{2^{j-1}\cdot 6} -1}\right\rceil, \hspace{20pt}
t_j = \frac{(2^{j-1} \cdot 6 -1)n}{2^j \cdot 6}.
\]
For $j=1$ this is consistent with the previous definition. Now for integers $2 \leqslant j \leqslant \log(n/6)$ and $\floor{t_{j-1}} +1 \leqslant t \leqslant \floor{t_j}$ we define  
\[
x_t = (\mathfrak p_{\alpha_j, \alpha_j+t}, c_t)
\]
where $c_t = n-\alpha_j-t$. Now, similarly to before, we must check four properties. Assume that $j\geqslant 2$.
\begin{enumerate}
\item[(a)] Observe that $1 \leq \alpha_j <  (\alpha_j + \floor{t_{j-1}} +1) / 3$ and so the partition $\mathfrak p_{\alpha_j, \alpha_j+t}$ is well-defined.
\item[(b)] Next note that $c_t=n-\alpha_j - t > t > \alpha_j$ and so $\alpha_j$ and $t$ are not partial sums of $x_t$.
\item[(c)] Notice that the second case of Lemma \ref{cruc} does not occur. Indeed, $4\alpha_j+2$ is strictly smaller than $\alpha_j +\floor{t_{j-1}}+1$. Moreover, it is easy to check that the case $(\ell,i)=(8,1)$ cannot occur. 
\item[(d)] Finally observe that $\alpha_j+\floor{t_{j-1}}+1 \geq \floor{n/2}$, and we conclude that all numbers up to $n$ are partial sums in $x_t$ apart from $\alpha_1$ and $t$.
\end{enumerate}
{\bf Conclusion~3}: Set $m=\floor{\log(n/6)}$. For $j=2,\dots, m$ and $\floor{t_{j-1}} +1 \leqslant t \leqslant \floor{t_j}$, the partition $x_t$ admits all partial sums up to $n$, except $\alpha_j$ and $t$.

We have now constructed $\floor{t_m}$ partitions of $n$; set $X_0=\{x_1, \ldots, x_{\floor{t_m}}\}$. Notice that, by the choice of $m$, $2^{m+1}\cdot 6 > n$. Then 
\begin{align}\label{e: tm}
|X_0|=\floor{t_m} &> \frac{(2^{m-1} \cdot 6 -1)n}{2^m \cdot 6}-1 \\
&= \frac{n}{2} - \frac{n}{2^m \cdot 6} - 1 \nonumber \\
&> \frac n2 -3.\nonumber
\end{align}
Now we will remove some elements from $X_0$. First, observe that $\alpha_j<n/6$ for every $j$; we start by taking the subset $X$ obtained by removing $x_{\alpha_j}$ for every $j \geqslant 1$. 

{\bf (C3)} Lemma~\ref{cruc}, and the three conclusions listed above imply that, for each $t$ satisfying $1\leq t\leq t_m$, the partition $x_t$ is the unique partition in $X$ which does not admit $t$ as a partial sum. Now we divide into two cases.
\begin{enumerate}
\item There exists an integer belonging to the interval $(t_m, n/2]$ which is a partial sum for all $x_t\in X$. Then, the minimum such integer is $\floor{t_m}+1$; we add one further partition to $X$:
\[
z = (1^{\floor{t_m}}, n-\floor{t_m}).
\]
\item No integer in $(t_m, n/2]$ is a partial sum in all $x_t$'s. Observing that $t_m\leq n/2-1$, Lemma~\ref{cruc} and the three conclusions above imply that $t_m = n/2-1$, i.e., $n=2^m \cdot 6$.  In this case we could leave $X$ unchanged, and the statement would be proved. However, we prefer to immediately modify the set $X$. 
Notice that $\alpha_m=1$ and $t_{m-1}=n/2-2$; it follows that $x_1=\mathfrak p_{1, n} \notin X$, and $X$ contains a unique partition, namely $x_{t_m}$, of the form $(\mathfrak p_{1, 1+t}, c_t)$. Then, we remove such partition and we reintegrate the partition $x_1$ in $X$. Moreover, we add to $X$ one further partition 
\[
z = (1^{n/2-2}, n/2+2).
\]
\end{enumerate}
{\bf (C4)} Our construction is finished. Let us make one observation, before concluding the proof. In case (2) above, by construction $x_1 \in X$. We claim that the same holds in case (1). Indeed, one can easily check that $\alpha_m = 1$ if and only if $n=2^m \cdot 6$, and otherwise $\alpha_j > 1$ for every $j$. Therefore, in case (1), in our procedure we did not remove $x_1$ from $X_0$, hence clearly $x_1 \in X$.

We are now ready to conclude the proof of the statement. The considerations above imply that items (1) and (2) of the statement hold. 
Regarding item (3),
\begin{align*}
 |X| &\geq |X_0|+1 - \log(n/6) \\
 &\geq \frac{n}{2}-2-\log n + \log 6 \\
 &> \frac n2 - \log n.
\end{align*}
The proposition is now proved.
\end{proof}

We now deduce the lower bound of Theorem~\ref{t: main} from Proposition \ref{p: partitions}.

\begin{proof}[Proof of the lower bound of Theorem~\ref{t: main}]
 For $n=2$, $n/2-\log n=0$ and the statement is trivial. For $3\leq n\leq 10$, we have $n/2-\log n<2$. Since certainly $m_I(S_n) \geq 2$, the statement holds and we may assume $n \geq 11$.

Consider the set $X$ of partitions constructed in the proof of Proposition \ref{p: partitions}. We will consider the elements of $X$ as conjugacy classes of $S_n$. We want to show that $X$ is a MIG-set for $S_n$.

It is easy to check the statement for $n=11,12$ (see the paragraph {\bf (C1)} in the proof of Proposition \ref{p: partitions}). Assume now $n \geq 13$. 
By Proposition \ref{p: partitions}(1), the classes of $X$ cannot have non-empty intersection with an intransitive subgroup of $S_n$. On the other hand, by Proposition \ref{p: partitions}(2), if we drop one class from $X$, then the remaining classes have non-empty intersection with some intransitive subgroup.

Now we deal with transitive groups. Note that $X$ is not contained in $A_n$, since $x_1$ corresponds to an odd permutation (see the paragraphs {\bf (C2)} and {\bf (C4)}). Moreover, a power of $x_1$ corresponds to a cycle of prime length fixing at least $3$ points, which belongs to no primitive group different from $A_n$ and $S_n$ by a classical theorem of Jordan. Assume now the classes of $X$ preserve a partition of $\{1, \ldots, n\}$ made of $r>1$ blocks of size $k>1$. Recall that $X$ contains a partition $z=(1^{n-\ell}, \ell)$, with $\ell=n/2+2$ or $\ell=n-\floor{t_m}$ (see the paragraph {\bf (C3)}). By eq. \eqref{e: tm} in the proof of Proposition \ref{p: partitions}, we have $n/2-3 < \floor{t_m}< n/2$, and in particular $n/2<\ell < n/2+3$. We have that $k$ must divide $\ell$. Since $k$ also divides $n$, we get $k < 6$. If $n\neq 15$, then $x_1$ cannot preserve blocks of size at most $5$. If $n=15$, we note that $X$ contains $x_4=\mathfrak p_{4,15}$, and we may take $\mathfrak p_{4,15}=(1^3,5,7)$, which does not preserve any nontrivial partition of $\{1,\dots, 15\}$. The proof is now concluded.
\end{proof}

\section{The upper bound}
\label{s: upper}

In this section we prove the upper bound in Theorem \ref{t: main}, and we prove Corollary \ref{c: main}. Our main tool is the following result, which follows quickly from \cite{garzonigill}. Recall that $k(G)$ denotes the number of conjugacy classes of a finite group $G$.

\begin{theorem}\label{p: locally valid}
Let $G$ be a maximal almost simple primitive subgroup of $S_n$, and assume $k(G)\geq \frac n2$. Then one of the following occurs:
\begin{enumerate}
    \item $G$ is listed in Table~\ref{tab: max};
    \item $G=A_n$, or $G=S_d$ and the action of $G$ on $n$ points is isomorphic to the action on the set of $k$-subsets of $\{1, \ldots, d\}$ for some $2\leq k<d/2$.
    \item $G=\PGammaL_d(q)$ and the action of $G$ on $n$ points is isomorphic to the action on the set of $1$-subspaces of $\F_q^d$.
\end{enumerate}
\end{theorem}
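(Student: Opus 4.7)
The plan is to derive this result quickly from \cite[Theorem 1.2]{garzonigill}, which, without any maximality hypothesis, classifies all almost simple primitive subgroups $G$ of $S_n$ satisfying $k(G)\geq n/2$. That classification consists of a few infinite families together with a finite list of sporadic exceptions, so the task reduces to imposing the maximality condition and sorting the surviving cases into the three outcomes of the statement.

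First I would dispose of the infinite families appearing in \cite[Theorem 1.2]{garzonigill}. The natural action of $A_n$ on $\{1,\dots,n\}$ yields the first part of case~(2), while $S_n$ in the same action is the whole group and so contributes nothing. The action of $S_d$ (respectively $A_d$) on $k$-subsets of $\{1,\dots,d\}$ with $2\leq k<d/2$ gives case~(2); here if $G=A_d$ then $S_d$ is a strictly larger almost simple primitive subgroup of $S_n$ with the same socle, so $G$ is not maximal. For the projective $1$-subspace action, every subgroup between $\PSL_d(q)$ and $\PGammaL_d(q)$ is contained in $\PGammaL_d(q)$, hence the only maximal one in $S_n$ is $\PGammaL_d(q)$ itself, yielding case~(3). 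The hyperplane action, when $d\geq 3$, is equivalent to the point action via the graph automorphism and is therefore absorbed into case~(3) as well.

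Next I would go through the finitely many sporadic $(G,n)$ remaining in \cite[Theorem 1.2]{garzonigill}. For each of these I would verify, using the O'Nan--Scott theorem combined with the Liebeck--Praeger--Saxl classification of maximal subgroups of $S_n$ and, where needed, the \textsc{Atlas}, whether $G$ is a maximal subgroup of $S_n$ in the given action. The candidates that pass the test are precisely those recorded in Table~\ref{tab: max}; the rest are discarded because they sit inside a strictly larger primitive almost simple overgroup (often with the same socle, sometimes because the containment happens inside $A_n$ rather than inside $S_n$).

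The main obstacle, and really the only one, is the case-by-case book-keeping in the final step: verifying maximality for each sporadic entry in \cite[Theorem 1.2]{garzonigill}, and in particular checking that the group is not contained in $A_n$ and is not dominated by one of the generic families in (2) or (3) via an exceptional isomorphism (as happens for small classical groups). This is routine given the standard classifications of maximal subgroups of $S_n$ and $A_n$, but it is exactly what determines the concrete entries of Table~\ref{tab: max}.
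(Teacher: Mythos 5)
Your proposal is correct and follows essentially the same route as the paper: the authors likewise deduce the statement directly from \cite[Theorem 1.2]{garzonigill}, and then simply check which of the finitely many entries of \cite[Table 1]{garzonigill} are maximal subgroups of $S_n$ (they do this check computationally with \cite{GAP}, whereas you propose the Liebeck--Praeger--Saxl classification and the \textsc{Atlas}, but this is the same verification carried out by different means). Your explicit handling of the infinite families (non-maximality of $A_d$ on $k$-subsets, of proper subgroups of $\PGammaL_d(q)$, and the hyperplane action being equivalent to the point action) is exactly the sorting the paper leaves implicit in its one-line proof.
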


Note that the subgroups mentioned at item (2) satisfy $n=\binom{d}{k}$ for some integer $k$ with $1\leq k<d/2$; and the subgroups mentioned at item (3) satisfy $n=(q^d-1)/(q-1)$.

\begin{table}
\centering
\begin{tabular}{ccc}
 \hline\noalign{\smallskip}
$n$ & $G$ & $k(G)$  \\
\noalign{\smallskip}\hline\noalign{\smallskip}
22 & $M_{22}.2$ &  21\\
40 & $\SU_4(2).2$ & 25 \\
45 & $\SU_4(2).2$ & 25 \\
\noalign{\smallskip}\hline\noalign{\medskip}
\end{tabular}
\caption{Some maximal almost simple primitive subgroups, $G$ of $S_n$, for which $k(G)\geq \frac{n}{2}$. In every case there is a single $S_n$-conjugacy class of primitive subgroups isomorphic to $G$.
}\label{tab: max}
\end{table}

\begin{proof}
The statement follows from \cite[Theorem 1.2]{garzonigill}, by checking with \cite{GAP} which of the entries in \cite[Table 1]{garzonigill} correspond to maximal subgroups of $S_n$.
\end{proof}

As we observed in the introduction, the upper bound in Theorem \ref{t: main} follows immediately from Proposition~\ref{p: upper}, which we prove now.

\begin{proof}[Proof of Proposition~\ref{p: upper}]

We make use of the families of maximal subgroups given in the Aschbacher--O'Nan--Scott theorem, in particular the description given in \cite {LPS}. 

\begin{enumerate}
    \item {\bf Intransitive subgroups}: There are exactly $\lfloor\frac{n}{2}\rfloor$ conjugacy classes of these.
    \item {\bf Imprimitive subgroups}: There are $\Delta(n)-2$ of these, where $\Delta$ is the divisor function.
    \item {\bf Affine subgroups}: There is at most $1$ conjugacy class of these.
    \item {\bf Almost simple subgroups}: If $M$ is almost simple and $k(M)\geq \frac n2$, then it is among the possibilities listed by Theorem~\ref{p: locally valid}, as follows.
\begin{enumerate}
    \item There are three possibilities for degrees $22, 40, 45$ listed in Table~\ref{tab: max}.
    \item There is at most one conjugacy class of maximal subgroups isomorphic to $\PGammaL_d(q)$ whenever $n=\frac{q^d-1}{q-1}$; we let $a_n$ be the number of pairs $(q,d)$ where $q$ is a prime power, $d$ is a positive integer, and $\frac{q^d-1}{q-1}=n$.
    \item There is at most one conjugacy class of maximal subgroups with socle $A_d$ whenever $n=\binom{d}{k}$ for some $k$; we let $b_n$ be the number of pairs $(d,k)$ where $d$ and $k$ are positive integers with $k\leq d/2$ and $\binom{d}{k}=n$. 
    \end{enumerate}
    \item {\bf Diagonal subgroups}: \cite[Theorem 1.1]{garzonigill} states that $k(M)<\frac n2$ in this case, so we can ignore these subgroups. 
    \item {\bf Product action subgroups}: In this case we have maximal subgroups isomorphic to $S_d\wr S_k$, where $n=d^k$ and $k>1$.  For fixed values of $d$ and $k$, there is one conjugacy class, thus the number of conjugacy classes in $S_n$ is equal to the number of pairs $(d,k)$ where $d$ and $k$ are positive integers with $k>1$ and $n=d^k$; we write this number as $c_n$.
    \item {\bf Twisted wreath subgroups}: These are never maximal, as they are defined to be subgroups of groups with a product action \cite{LPS} and so can be ignored (and in any case, $k(M)<\frac n2$ by \cite[Theorem 1.1]{garzonigill}).
\end{enumerate}

Observe that the number of conjugacy classes of maximal subgroup in $S_n$ that are either imprimitive, affine, or given in Table \ref{tab: max} is at most $\Delta(n)-1$. Therefore, if $\{M_1, \ldots, M_t\}$ is a set of maximal subgroups as in the statement, we have
\begin{equation}
\label{eq: upper bound}
t\leq \floor{\frac n2} + \Delta(n) + a_n + b_n + c_n -1.
\end{equation}
(We will use this in the proof of Corollary \ref{c: main}.) In order to prove Proposition~\ref{p: upper}, it is clearly enough to show that 
\[a_n+b_n+c_n =O\left(\frac{\log n}{\log \log n}\right).\]

To bound $a_n$, observe that if
    \[
    \frac{q_1^{d_1}-1}{q_1-1} = \frac{q_2^{d_2}-1}{q_2-1},
    \]
then $q_1$ and $q_2$ must be coprime. We obtain that $a_n$ must be bounded above by the number of distinct prime divisors of $n-1$. 
In \cite{robin} it is proved that this number is
\[
O\left(\frac{\log(n-1)}{\log \log(n-1)}\right),
\]
whence the same upper bound holds for $a_n$.

To bound $b_n$ we refer to a result of Kane \cite{kane2}, which asserts that\footnote{Singmaster's conjecture \cite{singmaster} asserts that $b_n$ is bounded above by an absolute constant; de Weger proposes that in fact this constant can be taken to be $4$ \cite{dew}, and evidence for the veracity of this conjecture is given in \cite{bbd}; in particular this is known to be true if $n\leq 10^{60}$.}
\[
b_n = O\left(\frac{\log n \log \log \log n}{(\log\log n)^3}\right).
\]

To bound $c_n$, we first recall (see \cite[Theorem 13.12]{Apo})
that, for a positive integer $x$, 
\[
\Delta(x)\leq \text{exp}_2\left\{\frac{(1+o(1))\log x}{\log \log x}\right\}.
\]
Now consider the prime factorization of $n$: $n=p_1^{a_1}\cdots p_t^{a_t}$. 
If $n=d^k$ then $p_1\cdots p_t$ divides $d$ and $k$ divides $a:=\text{gcd}\{a_1, \ldots, a_t\}$. Therefore, the number of choices for $k$ is at most the number of divisors of $a$ different from $1$. Now note that $a\leq \log n$, 
and therefore
\[
c_n \leq \text{exp}_2\left\{\frac{(1+o(1))\log \log n}{\log \log \log n}\right\}.
\]

In particular we see that each of $a_n$, $b_n$, $c_n$ is $O(\log n/\log \log n)$. This proves the proposition.
\end{proof}

We conclude with the proof of Corollary~\ref{c: main}.

\begin{proof}[Proof of Corollary~\ref{c: main}]
It is easy to see that $\{(1,2), (2,3), (3,4), \dots, (n-1, n)\}$ is a minimal generating set of size $n-1$. It is, therefore, enough to show that $m_I(S_n)< n-1$.

From eq. \eqref{eq: upper bound} in the proof of Proposition \ref{p: upper}, we deduce that
\[
m_I(S_n)\leq \floor{\frac n2} + \Delta(n) + a_n + b_n + c_n -1,
\]
therefore it is sufficient to show that $\Delta(n)+a_n+b_n+c_n<n/2$. 
Very weak estimates are enough here. First assume that $n\geq 71$.

As remarked in the proof of Proposition \ref{p: upper}, $a_n$ is bounded above by the number of distinct prime divisors of $n-1$, which is at most $\log n$. Moreover, $c_n$ is bounded above by $\max \{\Delta(x) : x\leq \floor{\log n}\}$, which is at most $\log n$. Let us consider $b_n$. Let $(d_1,k_1), \dots, (d_b, k_b)$ be pairs such that $\binom{d_i}{k_i}=n$ for all $i=1,\dots, b$. Order so that $i<j$ implies that $k_i< k_j$ and observe that then $k_b \geq b$ and $d_b\geq 2b$. This implies that $n\geq \binom{2b}{b}> 2^b$. In particular $b<\log n$.

Finally we need to bound $\Delta(n)$. For every real number $a\in (0,n]$, we have $\Delta(n)< n/a +a$. By choosing $a=\sqrt n$, we deduce $\Delta(n)< 2\sqrt n$.

Therefore $\Delta(n)+a_n+b_n+c_n<2\sqrt n +3\log n$, and it is sufficient to show that $2\sqrt n +3\log n\leq n/2$. Since $n\geq 71$, this is indeed the case.

For $n\leq 70$ we use \cite{GAP} to find that, except when $n\in\{5,6,8,12\}$, $S_n$ has less than $n-1$ conjugacy classes of maximal subgroup and the result follows immediately. 

For the remaining cases, say that two cycle types are \textit{equivalent} if one is a power of the other one; e.g., $(2,2)$ is equivalent to $(4)$, $(2,3,3)$ is equivalent to $(2,1^6)$, etc. Note that a MIG-set of $S_n$ of size $t$ must contain $t$ pairwise non-equivalent cycle types.

For $n\in \{5,8,12\}$, there are exactly $n-1$ conjugacy classes of maximal subgroups of $S_n$. However, in each case, there is one which does not intersect at least $n-3$ pairwise non-equivalent cycle types, and the result follows. (For $n=5$ we may take $\AGL_1(5)$, for $n=8$ we may take $\PGL_2(7)$, and for $n=12$ we may take $\PGL_2(11)$.)

For $n=6$, we note that a MIG-set of size $t\geq 5$ must contain $5$ distinct non-trivial cycle types, each of which intersects non-trivially $4$ pairwise non-conjugate maximal subgroups. A direct check shows that the cycle types with this property are
\[
(2), (2^2), (2^3), (3), (3^2), (4), (4,2). \tag*{$(\star)$}
\]
Now, a set of $4$ pairwise non-equivalent cycle types, each intersecting non-trivially $\PGL_2(5)$, must contain the cycle type $(5)$, which does not appear in $(\star)$. Therefore, we deduce that a MIG-set of size $t\geq5$ must contain $5$ distinct cycle types, each of which intersects non-trivially $4$ pairwise non-conjugate maximal subgroups, not isomorphic to $\PGL_2(5)$. We see that $(4)$ does not have this property. All other cycle types appearing in $(\star)$ intersect non-trivially $S_3\wr S_2$, and we deduce that $m_I(S_6)<5$, as wanted.
\end{proof}

\bibliography{references}
\bibliographystyle{alpha}

\end{document}